\date{\today}
\newtheorem{theorem}{Theorem}
\newtheorem{proposition}{Proposition}
\newtheorem{lemma}{Lemma}
\theoremstyle{definition}
\newtheorem{example}{Example}
\newtheorem{remark}{Remark}
\newtheorem{definition}{Definition}
\begin{document}

\title[On non-topologizable semigroups]{On non-topologizable semigroups}

\author{Oleg~Gutik}
\address{Faculty of Mathematics, National University of Lviv,
Universytetska 1, Lviv, 79000, Ukraine}
\email{oleg.gutik@lnu.edu.ua,
ovgutik@yahoo.com}

\keywords{semitopological semigroup, topological semigroup, left semitopological semigroup, right semitopological semigroup, discrete topology, bicyclic monoid, compact, locally compact, Baire space}

\subjclass[2020]{22A15, 54C08, 54D10, 54D30, 54E52, 54H10}

\begin{abstract}
We find anti-isomorphic submonoids $\mathscr{C}_{+}(a,b)$  and $\mathscr{C}_{-}(a,b)$ of the bicyclic monoid $\mathscr{C}(a,b)$ with the following properties:
every Hausdorff left-continuous (right-continuous) topology on  $\mathscr{C}_{+}(a,b)$  ($\mathscr{C}_{-}(a,b)$) is discrete and  there exists a compact Hausdorff topological monoid $S$ which contains $\mathscr{C}_{+}(a,b)$  ($\mathscr{C}_{-}(a,b)$) as a submonoid. Also, we construct a non-discrete right-continuous  (left-continuous)  topology $\tau_p^+$ ($\tau_p^-$) on the semigroup $\mathscr{C}_{+}(a,b)$ ($\mathscr{C}_{-}(a,b)$) which is not left-continuous (right-continuous).
\end{abstract}

\maketitle

\section{\textbf{Introduction, motivation and main definitions}}\label{section-1}

In this paper we shall follow the terminology of \cite{Carruth-Hildebrant-Koch=1983, Clifford-Preston=1961,  Engelking=1989, Ruppert=1984}.

\smallskip

By $\omega$ we denote the set of all non-negative integers. Throughout these notes we always assume that all topological spaces involved are Hausdorff --- unless explicitly stated otherwise.

\begin{definition}
Let $X$, $Y$ and $Z$ be topological spaces. A map $f\colon X\times Y\to Z$, $(x,y)\mapsto f(x,y)$, is called
\begin{itemize}
  \item[$(i)$] \emph{right} [\emph{left}] \emph{continuous} if it is continuous in the right [left] variable; i.e., for every fixed $x_0\in X$ [$y_0\in Y$] the map $Y\to Z$, $y\mapsto f(x_0,y)$ [$X\to Z$, $x\mapsto f(x,y_0)$] is continuous;
  \item[$(ii)$] \emph{separately continuous} if it is both left and right continuous;
  \item[$(iii)$] \emph{jointly continuous} if it is continuous as a map between the product space $X\times Y$ and the space $Z$.
\end{itemize}
\end{definition}

\begin{definition}[\cite{Carruth-Hildebrant-Koch=1983, Ruppert=1984}]
Let $S$ be a non-void topological space which is provided with an associative multiplication (a semigroup operation) $\mu\colon S\times S\to S$, $(x,y)\mapsto \mu(x,y)=xy$. Then the pair $(S,\mu)$ is called
\begin{itemize}
  \item[$(i)$] a \emph{right topological semigroup} if the map $\mu$ is right continuous, i.e., all interior left shifts $\lambda_s\colon S\to S$, $x\mapsto sx$, are continuous maps, $s\in S$;
  \item[$(ii)$] a \emph{left topological semigroup} if the map $\mu$ is left continuous, i.e., all interior right shifts $\rho_s\colon S\to S$, $x\mapsto xs$, are continuous maps, $s\in S$;
  \item[$(iii)$] a \emph{semitopological semigroup} if the map $\mu$ is separately continuous;
  \item[$(iv)$] a \emph{topological semigroup} if the map $\mu$ is jointly continuous.
\end{itemize}

We usually omit the reference to $\mu$ and write simply $S$ instead of $(S,\mu)$. It goes without saying that every topological semigroup is
also semitopological and every semitopological semigroup is both a right and left topological semigroup.
\end{definition}

A topology $\tau$ on a semigroup $S$ is called:
\begin{itemize}
  \item a \emph{semigroup} topology if $(S,\tau)$ is a topological semigroup;
  \item a \emph{shift-continuous} topology if $(S,\tau)$ is a semitopological semigroup;
  \item an \emph{left-continuous} topology if $(S,\tau)$ is a left topological semigroup;
  \item an \emph{right-continuous} topology if $(S,\tau)$ is a right topological semigroup.
\end{itemize}

The \emph{bicyclic monoid} ${\mathscr{C}}(a,b)$ is the semigroup with the identity $1$ generated by two elements $a$ and $b$ subjected only to the condition $ab=1$. The semigroup operation on ${\mathscr{C}}(a,b)$ is determined as
follows:
\begin{equation*}\label{eq-1}
    b^ka^l\cdot b^ma^n=
    \left\{
      \begin{array}{ll}
        b^{k-l+m}a^n, & \hbox{if~} l<m;\\
        b^ka^n,       & \hbox{if~} l=m;\\
        b^ka^{l-m+n}, & \hbox{if~} l>m.
      \end{array}
    \right.
\end{equation*}
It is well known that the bicyclic monoid ${\mathscr{C}}(a,b)$ is a bisimple (and hence simple) combinatorial $E$-unitary inverse semigroup and every non-trivial congruence on ${\mathscr{C}}(a,b)$ is a group congruence \cite{Clifford-Preston=1961}.

\smallskip

For a semigroups $S$ and $T$ a map $\alpha\colon S\to S$ is said to be an \emph{anti-homomorphism} if $\alpha(s\cdot t)=\alpha(t)\cdot \alpha(s)$. A bijective anti-homomorphism of semigroups is called an \emph{anti-isomorphism}.

\smallskip

It is well known that topological algebra studies the influence of topological properties of its objects on their algebraic properties and the influence of algebraic properties of its objects on their topological properties. There are two main problems in topological algebra: the problem of non-discrete topologization and the problem of embedding into objects with some topological-algebraic properties.

\smallskip

In mathematical literature the question about non-discrete (Hausdorff) topologization was posed by Markov in \cite{Markov=1945}. Pontryagin gave well known conditions on a base at the unity of a group for its non-discrete topologization (see Theorem~4.5 of \cite{Hewitt-Roos=1963}). Various authors have refined Markov's question: \emph{can a given infinite group $G$ endowed with a non-discrete group topology be embedded into a compact topological group?} Again, for an arbitrary Abelian group $G$ the answer is affirmative, but there is a non-Abelian topological group that cannot be embedded into any compact
topological group ({see Section~9 of \cite{HBSTT-1984}}).

\smallskip

Also, Ol'shanskiy \cite{Olshansky=1980} constructed an infinite countable group $G$ such that every Hausdorff group topology on $G$ is discrete. Taimanov presented in \cite{Taimanov=1973} a commutative semigroup $\mathfrak{T}$ which admits only discrete Hausdorff semigroup topology. Also in \cite{Taimanov=1975} he gave sufficient conditions on a commutative semigroup to have a non-discrete semigroup topology. In \cite{Gutik=2016} it is proved that each shift-continuous  $T_1$-topology on the Taimanov semigroup $\mathfrak{T}$ is discrete.

\smallskip

The bicyclic monoid admits only the discrete shift-continuous Hausdorff topology \cite{Eberhart-Selden=1969, Bertman-West=1976}. If a Hausdorff (semi)topological semigroup $T$ contains the bicyclic monoid ${\mathscr{C}}(a,b)$ as a dense proper semigroup then $T\setminus {\mathscr{C}}(a,b)$ is a closed ideal of $T$ \cite{Eberhart-Selden=1969, Gutik=2015}. Moreover, the closure of ${\mathscr{C}}(a,b)$ in a locally compact topological inverse semigroup can be obtained (up to isomorphism) from ${\mathscr{C}}(a,b)$ by adjoining the additive group of integers in a suitable way \cite{Eberhart-Selden=1969}.

\smallskip

Stable and $\Gamma$-compact topological semigroups do not contain the bicyclic monoid~\cite{Anderson-Hunter-Koch=1965, Hildebrant-Koch=1986, Koch-Wallace=1957}. The problem of embedding the bicyclic monoid into compact-like topological semigroups was studied in \cite{Banakh-Dimitrova-Gutik=2009, Banakh-Dimitrova-Gutik=2010, Bardyla-Ravsky=2020, Gutik-Repovs=2007}.

\smallskip

Subsemigroups of the bicyclic monoid are studied in \cite{Descalco-Ruskuc=2005, Descalco-Ruskuc=2008, Hovsepyan=2020}.

\smallskip

We define the following subsets of the bicyclic monoid
\begin{equation*}
  \mathscr{C}_{+}(a,b)=\left\{b^ia^j\in\mathscr{C}(a,b)\colon i\leqslant j\right\}
  \quad \hbox{and} \quad
  \mathscr{C}_{-}(a,b)=\left\{b^ia^j\in\mathscr{C}(a,b)\colon i\geqslant j\right\}.
\end{equation*}

\begin{proposition}\label{proposition-3}
$\mathscr{C}_{+}(a,b)$ and $\mathscr{C}_{-}(a,b)$  are submonoids of $\mathscr{C}(a,b)$.
\end{proposition}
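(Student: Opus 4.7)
The plan is to verify the two defining conditions of a submonoid: that the identity $1 = b^0 a^0$ belongs to each set, and that each set is closed under the semigroup operation. The identity membership is immediate since $0 \leq 0$ and $0 \geq 0$. The bulk of the work is closure under multiplication, which I will settle by a direct case analysis based on the three-case product formula for $b^k a^l \cdot b^m a^n$ given in the introduction.

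Before doing the computation twice, I would observe that the map $\alpha\colon b^i a^j \mapsto b^j a^i$ is an anti-isomorphism of $\mathscr{C}(a,b)$ onto itself (this is easy to check from the product formula, since the three cases get permuted in the expected way and the roles of the exponents swap). Under $\alpha$, the condition $i \leq j$ becomes $j \leq i$ on the image, so $\alpha(\mathscr{C}_{+}(a,b)) = \mathscr{C}_{-}(a,b)$. Hence as soon as one of the two sets is shown to be a submonoid, the other follows automatically by transporting the submonoid structure along the anti-isomorphism $\alpha$.

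For $\mathscr{C}_{+}(a,b)$, take $b^k a^l$ and $b^m a^n$ with $k \leq l$ and $m \leq n$. In the case $l < m$, the product is $b^{k-l+m} a^n$, and from $k \leq l$ one has $k - l + m \leq m \leq n$, giving the required inequality. In the case $l = m$, the product is $b^k a^n$, with $k \leq l = m \leq n$. In the case $l > m$, the product is $b^k a^{l-m+n}$, and from $k \leq l$ and $n \geq m$ one gets $k \leq l \leq l - m + n$. Thus the product lies in $\mathscr{C}_{+}(a,b)$ in every case. By the previous paragraph, $\mathscr{C}_{-}(a,b)$ is then also a submonoid.

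There is no real obstacle here; the only point requiring attention is to make sure the three subcases of the product formula are each checked against the correct defining inequality. If one preferred to avoid the appeal to the anti-isomorphism, the analogous direct verification for $\mathscr{C}_{-}(a,b)$ is a mirror-image computation using $k \geq l$ and $m \geq n$ in place of the reversed inequalities, and proceeds exactly in parallel.
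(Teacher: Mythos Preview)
Your proof is correct and follows essentially the same approach as the paper: a direct case analysis on the product formula to verify closure of $\mathscr{C}_{+}(a,b)$, with the identity membership being immediate. The only cosmetic difference is that you invoke the anti-automorphism $\alpha\colon b^ia^j\mapsto b^ja^i$ of the whole bicyclic monoid to transport the result to $\mathscr{C}_{-}(a,b)$, whereas the paper simply says the argument is similar (and later proves the anti-isomorphism separately as the next proposition); this is a harmless reordering, since the anti-automorphism property of $\alpha$ on all of $\mathscr{C}(a,b)$ is independent of the submonoid claim.
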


\begin{proof}
For arbitrary $b^{i_1}a^{j_1}, b^{i_2}a^{j_2}\in \mathscr{C}_{+}(a,b)$ by the semigroup operation of the bicyclic monoid $\mathscr{C}(a,b)$ we have that
\begin{equation*}
  b^{i_1}a^{j_1}\cdot b^{i_2}a^{j_2}=
  \left\{
    \begin{array}{ll}
      b^{i_1-j_1+i_2}a^{j_2}, & \hbox{if~} j_1\leqslant i_2;\\
      b^{i_1}a^{j_1-i_2+j_2}, & \hbox{if~} j_1> i_2.
    \end{array}
  \right.
\end{equation*}

If $j_1\leqslant i_2$ we have that $i_1-j_1+i_2\geqslant i_2\geqslant 0$. Since $j_1\leqslant j_1$ and $j_2\leqslant j_2$ we get that $i_1-j_1+i_2\leqslant i_2$, and hence $i_1-j_1+i_2\leqslant j_2$.

\smallskip

If $j_1>i_2$ we get that $j_1-i_2+j_2\geqslant j_1$ because $i_2\leqslant j_2$. Hence $j_1-i_2+j_2\geqslant i_1$.

\smallskip

It is obvious that $1=b^0a^0$ is the identity element of $\mathscr{C}_{+}(a,b)$. This and above arguments imply that $\mathscr{C}_{+}(a,b)$ is a submonois of $\mathscr{C}(a,b)$.

\smallskip

The proof the statement that $\mathscr{C}_{-}(a,b)$ is a submonoid of $\mathscr{C}(a,b)$ is similar.
\end{proof}

In this paper we prove that every Hausdorff left-continuous (right-continuous) topology on the monoid $\mathscr{C}_{+}(a,b)$  ($\mathscr{C}_{-}(a,b)$) is discrete and show that there exists a compact Hausdorff topological monoid $S$ which contains $\mathscr{C}_{+}(a,b)$  ($\mathscr{C}_{-}(a,b)$) as a submonoid. Also, we construct a non-discrete right-continuous (left-continuous) topology $\tau_p^+$ ($\tau_p^-$) on the semigroup $\mathscr{C}_{+}(a,b)$ ($\mathscr{C}_{-}(a,b)$) which is not left-continuous (right-continuous).

\section{\textbf{Algebraic properties and topologizations of monoids $\mathscr{C}_{+}(a,b)$ and $\mathscr{C}_{-}(a,b)$}}

\begin{proposition}\label{proposition-4}
The monoids $\mathscr{C}_{+}(a,b)$ and $\mathscr{C}_{-}(a,b)$  are anti-isomorphic.
\end{proposition}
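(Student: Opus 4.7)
The plan is to exhibit an explicit anti-isomorphism via the ``swap'' map arising from the natural involution on the bicyclic monoid. Define
\[
\alpha\colon \mathscr{C}_{+}(a,b)\to \mathscr{C}_{-}(a,b),\qquad \alpha(b^ia^j)=b^ja^i.
\]
First I would check that $\alpha$ is well-defined and bijective: if $b^ia^j\in \mathscr{C}_{+}(a,b)$ then $i\leqslant j$, so $j\geqslant i$, which means $b^ja^i\in \mathscr{C}_{-}(a,b)$; and the analogously defined map $\beta\colon \mathscr{C}_{-}(a,b)\to \mathscr{C}_{+}(a,b)$, $b^ja^i\mapsto b^ia^j$, is a two-sided inverse. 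In particular $\alpha$ sends the identity $b^0a^0$ to itself.

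The substantive part is the anti-homomorphism identity $\alpha(xy)=\alpha(y)\alpha(x)$ for $x=b^{i_1}a^{j_1}$, $y=b^{i_2}a^{j_2}$ in $\mathscr{C}_{+}(a,b)$. This reduces to a direct comparison, using the multiplication table of $\mathscr{C}(a,b)$ in two cases:
\begin{itemize}
  \item[$(1)$] If $j_1\leqslant i_2$, then $xy=b^{i_1-j_1+i_2}a^{j_2}$, so $\alpha(xy)=b^{j_2}a^{i_1-j_1+i_2}$. On the other side, $\alpha(y)\alpha(x)=b^{j_2}a^{i_2}\cdot b^{j_1}a^{i_1}$; since $i_2\geqslant j_1$, the multiplication rule yields $b^{j_2}a^{i_2-j_1+i_1}$ (with the degenerate subcase $i_2=j_1$ giving $b^{j_2}a^{i_1}$, which agrees).
  \item[$(2)$] If $j_1>i_2$, then $xy=b^{i_1}a^{j_1-i_2+j_2}$, hence $\alpha(xy)=b^{j_1-i_2+j_2}a^{i_1}$; and $\alpha(y)\alpha(x)=b^{j_2}a^{i_2}\cdot b^{j_1}a^{i_1}$ with $i_2<j_1$ evaluates to $b^{j_2-i_2+j_1}a^{i_1}$, which coincides with the previous expression.
\end{itemize}
Both cases match, so $\alpha$ is an anti-homomorphism, and therefore an anti-isomorphism.

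I do not expect any genuine obstacle here: the argument is a routine verification, and the only mild bookkeeping issue is making sure that the case split in the product $\alpha(y)\alpha(x)$ corresponds correctly to the case split in $xy$. In effect one is observing that the natural inversion $b^ia^j\mapsto b^ja^i$ on the bicyclic inverse monoid interchanges $\mathscr{C}_{+}(a,b)$ with $\mathscr{C}_{-}(a,b)$, and one could alternatively phrase the proof by invoking that $\mathscr{C}(a,b)$ admits the involutive anti-automorphism $x\mapsto x^{-1}$ and noting that this involution carries $\mathscr{C}_{+}(a,b)$ onto $\mathscr{C}_{-}(a,b)$; either formulation concludes the proof without further work.
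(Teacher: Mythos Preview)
Your proof is correct and follows essentially the same route as the paper: the same map $\alpha(b^ia^j)=b^ja^i$ and the same case-by-case verification of $\alpha(xy)=\alpha(y)\alpha(x)$ using the bicyclic multiplication rule. Your additional remark that this is just the restriction of the inversion $x\mapsto x^{-1}$ on $\mathscr{C}(a,b)$ is a nice conceptual gloss but not a genuinely different argument.
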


\begin{proof}
We define a map $\alpha\colon \mathscr{C}_{+}(a,b)\rightarrow\mathscr{C}_{-}(a,b)$ by the formula $\alpha(b^ia^j)=b^ja^i$. Then for any $b^{i_1}a^{j_1}$, $b^{i_2}a^{j_2}\in \mathscr{C}_{+}(a,b)$ we have that
\begin{align*}
  \alpha(b^{i_1}a^{j_1}\cdot b^{i_2}a^{j_2})&=
  \left\{
    \begin{array}{ll}
      \alpha(b^{i_1-j_1+i_2}a^{j_2}), & \hbox{if~} j_1\leqslant i_2;\\
      \alpha(b^{i_1}a^{j_1-i_2+j_2}), & \hbox{if~} j_1> i_2.
    \end{array}
  \right.= \\
   &=
  \left\{
    \begin{array}{ll}
      b^{j_2}a^{i_1-j_1+i_2}, & \hbox{if~} j_1<i_2;\\
      b^{j_2}a^{i_1}, & \hbox{if~} j_1=i_2;\\
      b^{j_1-i_2+j_2}a^{i_1}, & \hbox{if~} j_1> i_2
    \end{array}
  \right.
\end{align*}
and
\begin{align*}
  \alpha(b^{i_1}a^{j_1})\cdot \alpha(b^{i_2}a^{j_2})&=b^{j_1}a^{i_1}\cdot b^{j_2}a^{i_2}= \\
   &=
  \left\{
    \begin{array}{ll}
      b^{j_2}a^{i_1-j_1+i_2}, & \hbox{if~} j_1<i_2;\\
      b^{j_2}a^{i_1}, & \hbox{if~} j_1=i_2;\\
      b^{j_1-i_2+j_2}a^{i_1}, & \hbox{if~} j_1> i_2.
    \end{array}
  \right.
\end{align*}
This implies that the map $\alpha$ is an anti-homomorphism. It is obvious that the so defined map $\alpha\colon \mathscr{C}_{+}(a,b)\rightarrow\mathscr{C}_{-}(a,b)$ is bijective, and hence it is an anti-isomorphism of monoids $\mathscr{C}_{+}(a,b)$ and $\mathscr{C}_{-}(a,b)$, because $\alpha(b^0a^0)=b^0a^0=1$.
\end{proof}

If $S$ is a semigroup, then we shall denote the Green relations on $S$ by $\mathscr{R}$, $\mathscr{L}$, $\mathscr{J}$, $\mathscr{D}$ and $\mathscr{H}$ (see \cite[Section~2.1]{Clifford-Preston=1961}):
\begin{align*}
    &\qquad a\mathscr{R}b \mbox{ if and only if } aS^1=bS^1;\\
    &\qquad a\mathscr{L}b \mbox{ if and only if } S^1a=S^1b;\\
    &\qquad a\mathscr{J}b \mbox{ if and only if } S^1aS^1=S^1bS^1;\\
    &\qquad \mathscr{D}=\mathscr{L}\circ\mathscr{R}=
          \mathscr{R}\circ\mathscr{L};\\
    &\qquad \mathscr{H}=\mathscr{L}\cap\mathscr{R}.
\end{align*}

The following proposition describes Green's relations on  monoids $\mathscr{C}_{+}(a,b)$ and $\mathscr{C}_{-}(a,b)$.

\begin{proposition}
Green's relations $\mathscr{R}$, $\mathscr{L}$, $\mathscr{J}$, $\mathscr{D}$ and $\mathscr{H}$ on  monoids $\mathscr{C}_{+}(a,b)$ and $\mathscr{C}_{-}(a,b)$  coincide with the equality relation.
\end{proposition}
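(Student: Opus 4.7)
My plan is to use a single arithmetic invariant, the \emph{defect} $\delta\colon \mathscr{C}_{+}(a,b)\to\omega$ defined by $\delta(b^ia^j)=j-i$. A direct inspection of the two cases in the bicyclic multiplication rule shows that $\delta(xy)=\delta(x)+\delta(y)$ for all $x,y\in\mathscr{C}_{+}(a,b)$, i.e.\ $\delta$ is a monoid homomorphism into $(\omega,+)$. This single additivity fact will drive every part of the argument.

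For $\mathscr{R}$, suppose $b^{i_1}a^{j_1}\cdot s = b^{i_2}a^{j_2}$ and $b^{i_2}a^{j_2}\cdot t = b^{i_1}a^{j_1}$ for some $s,t\in\mathscr{C}_{+}(a,b)^{1}$. Applying $\delta$ forces $\delta(s)=\delta(t)=0$, and the only elements of $\mathscr{C}_{+}(a,b)^{1}$ with $\delta=0$ are $1$ and the diagonal idempotents $b^pa^p$ ($p\geqslant 1$). The bicyclic rule gives $b^ia^j\cdot b^pa^p = b^ia^j$ when $p<j$ and $b^{i+p-j}a^p$ when $p\geqslant j$; in the non-trivial case the second index of the product is $p\geqslant j$, so combining the two equations yields $j_1=j_2$, after which the same formula gives $i_1=i_2$. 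The argument for $\mathscr{L}$ is dual via $b^pa^p\cdot b^ia^j = b^ia^j$ if $p\leqslant i$ and $b^pa^{p-i+j}$ if $p>i$. It follows at once that $\mathscr{H}=\mathscr{L}\cap\mathscr{R}$ and $\mathscr{D}=\mathscr{L}\circ\mathscr{R}$ also coincide with the equality relation.

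The main obstacle is $\mathscr{J}$, because in the ambient bicyclic monoid $\mathscr{J}$ is the universal relation, so all rigidity must come from the restriction that multipliers stay in $\mathscr{C}_{+}(a,b)$. Starting from $uxv=y$ and $u'yv'=x$ with $u,v,u',v'\in\mathscr{C}_{+}(a,b)^{1}$, additivity of $\delta$ again forces each of the four multipliers to be $1$ or $b^pa^p$. The two explicit formulas just recalled show that multiplication on either side by such a diagonal idempotent never decreases the first index; iterating gives $i(y)\geqslant i(x)$, and by the symmetric equation $i(x)\geqslant i(y)$. Combined with $\delta(x)=\delta(y)$ this forces $b^{i_1}a^{j_1}=b^{i_2}a^{j_2}$.

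Finally, Green's relations are preserved by any anti-isomorphism of semigroups, with $\mathscr{L}$ and $\mathscr{R}$ interchanged and $\mathscr{H},\mathscr{D},\mathscr{J}$ unchanged; hence Proposition~\ref{proposition-4} transfers the conclusion from $\mathscr{C}_{+}(a,b)$ to $\mathscr{C}_{-}(a,b)$ without further work.
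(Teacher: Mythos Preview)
Your argument is correct. The key device---the defect homomorphism $\delta(b^ia^j)=j-i$ from $\mathscr{C}_{+}(a,b)$ onto $(\omega,+)$---is well chosen: additivity forces all multipliers witnessing an $\mathscr{L}$-, $\mathscr{R}$-, or $\mathscr{J}$-relation to lie in $\delta^{-1}(0)=\{b^pa^p:p\in\omega\}$, after which the two explicit formulas for one-sided multiplication by a diagonal idempotent finish each case. The monotonicity observation for $\mathscr{J}$ (neither left nor right multiplication by $b^pa^p$ decreases the first index) is a clean replacement for a longer case analysis.

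The paper's proof reaches the same conclusion but is organised differently. For $\mathscr{L}$ and $\mathscr{R}$ the paper works directly with the three-case multiplication formula for \emph{arbitrary} multipliers, first pinning down one index ($l=j$ for $\mathscr{L}$, $k=i$ for $\mathscr{R}$) and only afterwards invoking what amounts to additivity of the defect to equate the other index. For $\mathscr{J}$ the paper does use the defect identity $i-j=(k_1-l_1)+(k-l)+(k_2-l_2)$ to reduce to idempotent multipliers, but then finishes via commutativity of idempotents and a substitution argument rather than your monotonicity-of-first-index observation. Your route is more uniform: a single invariant isolated at the outset handles all five relations with minimal casework, whereas the paper's proof is more computational and treats $\mathscr{L}$, $\mathscr{R}$, $\mathscr{J}$ with separate ad hoc manipulations. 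Both proofs appeal to the anti-isomorphism of Proposition~\ref{proposition-4} for $\mathscr{C}_{-}(a,b)$.
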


\begin{proof}
Suppose that $b^ia^j\mathscr{L}b^ka^l$ in $\mathscr{C}_{+}(a,b)$ for some $i,j,k,l\in\omega$, $i\leqslant j$, and $k\leqslant l$. Then there exist $b^xa^y,b^ua^v\in \mathscr{C}_{+}(a,b)$ such that $b^ia^j=b^ua^v\cdot b^ka^l$ and $b^ka^l=b^xa^y\cdot b^ia^j$. Then the equalities
\begin{equation}\label{eq-2.1}
  b^ka^l=b^xa^y\cdot b^ia^j=
  \left\{
    \begin{array}{ll}
      b^{x-y+i}a^{j}, & \hbox{if~} y<i;\\
      b^{x}a^{j},     & \hbox{if~} y=i; \\
      b^{x}a^{y-i+j}, & \hbox{if~} y>i
    \end{array}
  \right.
\end{equation}
imply that $l\geqslant j$, and by the equalities
\begin{equation}\label{eq-2.2}
  b^ia^j=b^ua^v\cdot b^ka^l=
  \left\{
    \begin{array}{ll}
      b^{u-v+k}a^{l}, & \hbox{if~} v<k;\\
      b^{u}a^{l},     & \hbox{if~} v=k; \\
      b^{u}a^{k-v+l}, & \hbox{if~} v>k
    \end{array}
  \right.
\end{equation}
we get that $l\leqslant j$, and hence $l=j$. Also, the equalities  \eqref{eq-2.1} imply that $k-l=(x-y)+(i-j)$. Since $x\leqslant y$, we have that  $k-l\leqslant i-j$. Similar, the equalities  \eqref{eq-2.2} imply that $i-j=(u-v)+(k-l)$, and hence $i-j\leqslant k-l$. Thus, $i-j=k-l$. Since $l=j$, we obtain that $k=i$. Hence $\mathscr{L}$ is the equality relation on the monoid $\mathscr{C}_{+}(a,b)$.

\smallskip

Suppose that $b^ia^j\mathscr{R}b^ka^l$ in $\mathscr{C}_{+}(a,b)$ for some $i,j,k,l\in\omega$, $i\leqslant j$, and $k\leqslant l$. Then there exist $b^xa^y,b^ua^v\in \mathscr{C}_{+}(a,b)$ such that $b^ia^j=b^ka^l\cdot b^ua^v$ and $b^ka^l=b^ia^j \cdot b^xa^y$. By the equalities
\begin{equation}\label{eq-2.3}
  b^ka^l=b^ia^j \cdot b^xa^y=
  \left\{
    \begin{array}{ll}
      b^{i-j+x}a^{y}, & \hbox{if~} j<x;\\
      b^{i}a^{y},     & \hbox{if~} j=x; \\
      b^{i}a^{j-x+y}, & \hbox{if~} j>x
    \end{array}
  \right.
\end{equation}
we get that $k\geqslant i$, and by the equalities
\begin{equation}\label{eq-2.4}
  b^ia^j=b^ka^l\cdot b^ua^v=
  \left\{
    \begin{array}{ll}
      b^{k-l+u}a^{v}, & \hbox{if~} l<u;\\
      b^{k}a^{v},     & \hbox{if~} l=u; \\
      b^{k}a^{l-u+v}, & \hbox{if~} l>u
    \end{array}
  \right.
\end{equation}
we get that $k\leqslant i$, and hence $k=i$. The equalities  \eqref{eq-2.3} imply that $k-l=(i-j)+(x-y)$. Since $x\leqslant y$, we have that  $k-l\geqslant i-j$. Similar, the equalities  \eqref{eq-2.4} imply that $i-j=(k-l)+(u-v)$, and hence $i-j\geqslant k-l$. Thus, $i-j=k-l$. Since $l=j$, we obtain that $k=i$. Hence $\mathscr{R}$ is the equality relation on the monoid $\mathscr{C}_{+}(a,b)$.

\smallskip

Since $\mathscr{H}=\mathscr{L}\cap\mathscr{R}$ and $\mathscr{D}=\mathscr{L}\circ\mathscr{R}= \mathscr{R}\circ\mathscr{L}$, the previous part of the proof imply that $\mathscr{H}=\mathscr{D}=\mathscr{L}=\mathscr{R}$ in $\mathscr{C}_{+}(a,b)$.

\smallskip

Suppose that $b^ia^j\mathscr{J}b^ka^l$ in $\mathscr{C}_{+}(a,b)$ for some $i,j,k,l\in\omega$, $i\leqslant j$, and $k\leqslant l$. Then there exist $b^{k_1}a^{l_1},b^{k_2}a^{l_2},b^{i_1}a^{j_1},b^{i_2}a^{j_2}\in \mathscr{C}_{+}(a,b)$ such that
\begin{equation}\label{eq-2.5}
  b^ia^j=b^{k_1}a^{l_1}\cdot b^ka^l\cdot b^{k_2}a^{l_2}
\end{equation}
and
\begin{equation}\label{eq-2.6}
  b^ka^l=b^{i_1}a^{j_1}\cdot b^ia^j\cdot b^{i_2}a^{j_2}.
\end{equation}
The semigroup operation of $\mathscr{C}_{+}(a,b)$ implies that
\begin{equation}\label{eq-2.7}
  i-j=(k_1-l_1)+(k-l)+(k_2-l_2),
\end{equation}
and since $k_1\leqslant l_1$ and $k_2\leqslant l_2$, we get that $i-j\leqslant k-l$. Similar we get that
\begin{equation}\label{eq-2.8}
  k-l=(i_1-j_1)+(i-j)+(i_2-j_2),
\end{equation}
and since $i_1\leqslant j_1$ and $i_2\leqslant j_2$, we have that $k-l\leqslant i-j$. Hence we obtain that $i-j=k-l$. The last equality and equalities \eqref{eq-2.7} and \eqref{eq-2.8} imply that
\begin{equation*}
  i_1=j_1, \quad i_2=j_2, \quad k_1=l_1, \quad \hbox{and} \quad k_2=l_2,
\end{equation*}
because $i_1\leqslant j_1$, $i_2\leqslant j_2$, $k_1\leqslant l_1$, and $k_2\leqslant l_2$. Then the semigroup operation of $\mathscr{C}_{+}(a,b)$ implies that
\begin{equation*}
  i_1=j_1\leqslant k, \quad i_2=j_2\leqslant l, \quad k_1=l_1\leqslant i, \quad \hbox{and} \quad k_2=l_2\leqslant j.
\end{equation*}

Then we have that
\begin{equation*}
  b^ia^j=b^{k_1}a^{k_1}\cdot b^ka^l\cdot b^{k_2}a^{k_2}= b^{k_1}a^{k_1}\cdot b^{i_1}a^{i_1}\cdot b^ia^j\cdot b^{i_2}a^{i_2}\cdot b^{k_2}a^{k_2}
\end{equation*}
and
\begin{equation*}
  b^ka^l=b^{i_1}a^{i_1}\cdot b^ia^j\cdot b^{i_2}a^{i_2}=b^{i_1}a^{i_1}\cdot b^{k_1}a^{k_1}\cdot b^ka^l\cdot b^{k_2}a^{k_2}\cdot b^{i_2}a^{i_2}.
\end{equation*}
Since idempotents commute in $\mathscr{C}_{+}(a,b)$, the last two equalities and the semigroup operation of $\mathscr{C}_{+}(a,b)$ imply that $\max\{k_1,i_1\}\leqslant i,k$ and $\max\{k_2,i_2\}\leqslant j,l$. Hence, again using the semigroup operation of $\mathscr{C}_{+}(a,b)$ we get that $ b^ia^j=b^ka^l$. Hence $\mathscr{J}$ is the equality relation on the monoid $\mathscr{C}_{+}(a,b)$.

\smallskip

Appling Proposition~\ref{proposition-4} we obtain that the statement of the proposition holds for the monoid $\mathscr{C}_{-}(a,b)$.
\end{proof}

Since
\begin{equation*}
  b^xa^y\cdot b^ia^i=
  \left\{
    \begin{array}{ll}
      b^{x-y+i}a^{i}, & \hbox{if~} y<i;\\
      b^{x}a^{i},     & \hbox{if~} y=i; \\
      b^{x}a^{y},     & \hbox{if~} y>i,
    \end{array}
  \right.
\end{equation*}
we have that
\begin{equation}\label{eq-2.9}
  \mathscr{C}_{+}(a,b)\cdot b^ia^i=
  \left\{b^sa^t\in\mathscr{C}_{+}(a,b)\colon t\geqslant i\right\}
\end{equation}
for any $i\in\omega$. This implies that if $\tau$ is a Hausdorff left-continuous topology on the semigroup $\mathscr{C}_{+}(a,b)$ then for any idempotent $b^ia^i\in \mathscr{C}_{+}(a,b)$ the right shift $\rho_{b^ia^i}\colon \mathscr{C}_{+}(a,b)\to \mathscr{C}_{+}(a,b)$ is a retraction, $\mathscr{C}_{+}(a,b)\cdot b^ia^i$ is a retract of $(\mathscr{C}_{+}(a,b),\tau)$, and hence $\mathscr{C}_{+}(a,b)\cdot b^ia^i$ is a closed subset of the topological space $(\mathscr{C}_{+}(a,b),\tau)$ (see \cite[Ex.~1.5.C]{Engelking=1989}). The above arguments imply that every element $b^ka^l$ of the monoid $\mathscr{C}_{+}(a,b)$ has a finite open neighbourhood in the space $(\mathscr{C}_{+}(a,b),\tau)$. Since the topology $\tau$ is Hausdorff, $b^la^l$ is an isolated point $(\mathscr{C}_{+}(a,b),\tau)$. Hence we proved the following theorem.

\begin{theorem}\label{theorem-6}
Every Hausdorff left-continuous topology on the monoid $\mathscr{C}_{+}(a,b)$  is discrete.
\end{theorem}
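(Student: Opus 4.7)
The plan is to exploit the identity \eqref{eq-2.9} together with the fact that in a left-continuous topology the right translations $\rho_s\colon x\mapsto xs$ are continuous. The crucial observation is that for an idempotent $e=b^ia^i$ one has $\rho_e\circ\rho_e=\rho_e$, so $\rho_e$ is a continuous retraction of $(\mathscr{C}_{+}(a,b),\tau)$ onto the subset $\mathscr{C}_{+}(a,b)\cdot b^ia^i$. Since every retract of a Hausdorff space is closed, this yields that
\[
\mathscr{C}_{+}(a,b)\cdot b^ia^i=\{b^sa^t\in\mathscr{C}_{+}(a,b)\colon t\geqslant i\}
\]
is a closed subset of $(\mathscr{C}_{+}(a,b),\tau)$ for every $i\in\omega$.

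Passing to complements, for every $l\in\omega$ the set
\[
U_l=\mathscr{C}_{+}(a,b)\setminus\bigl(\mathscr{C}_{+}(a,b)\cdot b^{l+1}a^{l+1}\bigr)=\{b^sa^t\in\mathscr{C}_{+}(a,b)\colon t\leqslant l\}
\]
is open. Since elements of $\mathscr{C}_{+}(a,b)$ must satisfy $s\leqslant t$, the constraint $t\leqslant l$ pins both exponents into the bounded range $\{0,1,\dots,l\}$, so $U_l$ is finite. Consequently any given element $b^ka^l$ possesses the \emph{finite} open neighbourhood $U_l$.

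To conclude, I would invoke the fact that a finite Hausdorff space is discrete: inside $U_l$ the singleton $\{b^ka^l\}$ is open, and since $U_l$ is itself open in $\mathscr{C}_{+}(a,b)$ we get that $\{b^ka^l\}$ is open in $(\mathscr{C}_{+}(a,b),\tau)$. Running this for an arbitrary point shows that $\tau$ must be the discrete topology. I do not anticipate a serious obstacle; the only care needed is to match the conventions (here \emph{left-continuous} means right shifts are continuous, which is exactly what makes $\rho_e$ available), and the ``retract of a Hausdorff space is closed'' step is a standard application of, e.g., \cite[Ex.~1.5.C]{Engelking=1989} already cited in the paper.
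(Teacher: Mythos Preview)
Your argument is correct and follows essentially the same route as the paper: use \eqref{eq-2.9} to identify $\mathscr{C}_{+}(a,b)\cdot b^ia^i$, observe that the right shift by an idempotent is a continuous retraction so its image is closed (citing the same \cite[Ex.~1.5.C]{Engelking=1989}), pass to complements to obtain a finite open neighbourhood of each point, and conclude from Hausdorffness that every singleton is open. Your write-up is in fact slightly more explicit than the paper's in spelling out the complement $U_l$ and the ``finite Hausdorff implies discrete'' step, but the underlying idea is identical.
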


Proposition~\ref{proposition-4} and Theorem~\ref{theorem-6} imply the following theorem.

\begin{theorem}\label{theorem-7}
Every Hausdorff right-continuous topology on the monoid $\mathscr{C}_{-}(a,b)$  is discrete.
\end{theorem}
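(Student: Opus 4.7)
The plan is to reduce Theorem~\ref{theorem-7} to Theorem~\ref{theorem-6} via the anti-isomorphism supplied by Proposition~\ref{proposition-4}. Concretely, let $\tau$ be any Hausdorff right-continuous topology on $\mathscr{C}_{-}(a,b)$, and let $\alpha\colon \mathscr{C}_{+}(a,b)\to \mathscr{C}_{-}(a,b)$ be the anti-isomorphism from Proposition~\ref{proposition-4}. Transport $\tau$ back along $\alpha$ by defining $\sigma=\{\alpha^{-1}(U)\colon U\in\tau\}$. Because $\alpha$ is bijective, $\sigma$ is a Hausdorff topology on $\mathscr{C}_{+}(a,b)$, and $\alpha$ is a homeomorphism $(\mathscr{C}_{+}(a,b),\sigma)\to(\mathscr{C}_{-}(a,b),\tau)$.

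The central observation is that an anti-isomorphism swaps left and right translations. Indeed, for any $s\in\mathscr{C}_{+}(a,b)$ and any $x\in\mathscr{C}_{+}(a,b)$, the right translation by $s$ satisfies
\begin{equation*}
  \rho_{s}(x)=x\cdot s=\alpha^{-1}\bigl(\alpha(x\cdot s)\bigr)=\alpha^{-1}\bigl(\alpha(s)\cdot\alpha(x)\bigr)=\bigl(\alpha^{-1}\circ\lambda_{\alpha(s)}\circ\alpha\bigr)(x),
\end{equation*}
so $\rho_{s}$ on $(\mathscr{C}_{+}(a,b),\sigma)$ is conjugate, via the homeomorphism $\alpha$, to the interior left shift $\lambda_{\alpha(s)}$ on $(\mathscr{C}_{-}(a,b),\tau)$. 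Since $\tau$ is right-continuous, every such $\lambda_{\alpha(s)}$ is continuous, and as $s$ ranges over $\mathscr{C}_{+}(a,b)$, $\alpha(s)$ ranges over all of $\mathscr{C}_{-}(a,b)$; hence every right translation $\rho_{s}$ on $(\mathscr{C}_{+}(a,b),\sigma)$ is continuous. By definition this says exactly that $\sigma$ is a left-continuous topology on $\mathscr{C}_{+}(a,b)$.

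Now Theorem~\ref{theorem-6} applies to $\sigma$: it must be discrete. Since $\alpha$ is a homeomorphism between $(\mathscr{C}_{+}(a,b),\sigma)$ and $(\mathscr{C}_{-}(a,b),\tau)$ and discreteness is a topological invariant, $\tau$ is discrete as well, completing the proof. There is no real obstacle here; the only point requiring care is verifying that passing through an anti-isomorphism interchanges the roles of left-continuous and right-continuous (equivalently, of $\lambda_{s}$ and $\rho_{s}$), which is the short computation displayed above.
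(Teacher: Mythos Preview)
Your proof is correct and follows exactly the approach indicated in the paper, which derives Theorem~\ref{theorem-7} directly from Proposition~\ref{proposition-4} and Theorem~\ref{theorem-6}; you have simply written out in full the routine verification that an anti-isomorphism converts a right-continuous topology into a left-continuous one.
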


In the paper \cite{Chornenka-Gutik=2023} the following two examples are constructed.

\begin{example}[{\cite[Example~2]{Chornenka-Gutik=2023}}]\label{example-8}
The topology $\tau_2$ on the bicyclic monoid ${\mathscr{C}}(a,b)$ is defined in the following way. For any $b^ia^j\in{\mathscr{C}}(a,b)$ and any non-negahtive integer $n$ put
\begin{equation*}
  O_n(b^ia^j)=\left\{b^ia^j\right\}\cup \{b^{i+l}a^{j+l}\colon l>n\}.
\end{equation*}
Let $\mathscr{B}_2(b^ia^j)=\left\{O_n(b^ia^j)\colon n\in\omega\right\}$ be the system of open neighbourhoods at the point $b^ia^j\in{\mathscr{C}}(a,b)$. It is obvious that the family $\mathscr{B}_2=\bigcup_{i,j\in\omega}\mathscr{B}_2(b^ia^j)$ satisfies the properties (BP1)--(BP3) of \cite{Engelking=1989}, and hence it generates a topology on ${\mathscr{C}}(a,b)$.
\end{example}

\begin{example}[{\cite[Example~3]{Chornenka-Gutik=2023}}]\label{example-9}
The topology $\tau_{\mathrm{c}}$ on the bicyclic semigroup  ${\mathscr{C}}(a,b)$ is defined in the following way. For any non-negative integer $n$ put
\begin{equation*}
  C_n=\left\{b^ia^j\in{\mathscr{C}}(a,b)\colon i,j\leqslant n\right\}.
\end{equation*}
Let
\begin{equation*}
\mathscr{B}_{\mathrm{c}}(b^ia^j)=\left\{W_n(b^ia^j)=\left\{b^ia^j\right\}\cup{\mathscr{C}}(a,b)\setminus C_n\colon n\in\omega\right\}
\end{equation*}
be the system of open neighbourhoods at the point $b^ia^j\in{\mathscr{C}}(a,b)$. It is obvious that the family $\mathscr{B}_{\mathrm{c}}=\displaystyle\bigcup_{i,j\in\omega}\mathscr{B}_{\mathrm{c}}(b^ia^j)$ satisfies the properties (BP1)--(BP3) of \cite{Engelking=1989}, and hence it generates the topology $\tau_{\mathrm{c}}$ on ${\mathscr{C}}(a,b)$.
\end{example}

By Proposition~2 of \cite{Chornenka-Gutik=2023}, $\tau_2$ is a locally compact semigroup $T_1$-topology on the bicyclic semigroup  ${\mathscr{C}}(a,b)$. Simple verifications show that $\tau_2$ induces on the monoid $\mathscr{C}_{+}(a,b)$  a locally compact semigroup $T_1$-topology. Also, by Proposition~3 of \cite{Chornenka-Gutik=2023}, $\tau_{\mathrm{c}}$ is a shift-continuous compact  $T_1$-topology  on ${\mathscr{C}}(a,b)$. It is obvious that $\tau_{\mathrm{c}}$ induces on the monoid $\mathscr{C}_{+}(a,b)$  a shift-continuous compact $T_1$-topology.

\smallskip

Lemma~I.1 of \cite{Eberhart-Selden=1969} implies

\begin{lemma}\label{lemma-10}
For each $v,w\in \mathscr{C}_{+}(a,b)$ both sets $\{u\in \mathscr{C}_{+}(a,b)) \colon vu=w\}$ and $\{u\in \mathscr{C}_{+}(a,b) \colon uv=w\}$ are finite.
\end{lemma}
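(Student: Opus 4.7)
The plan is to deduce Lemma~\ref{lemma-10} by restriction from the analogous finiteness property for the full bicyclic monoid, which is precisely the content of Lemma~I.1 of \cite{Eberhart-Selden=1969} cited in the sentence preceding the statement.

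Fix $v,w\in\mathscr{C}_{+}(a,b)$. By Proposition~\ref{proposition-3}, $\mathscr{C}_{+}(a,b)$ is a submonoid of $\mathscr{C}(a,b)$, so the multiplication on $\mathscr{C}_{+}(a,b)$ coincides with the multiplication of $\mathscr{C}(a,b)$ restricted to $\mathscr{C}_{+}(a,b)\times\mathscr{C}_{+}(a,b)$. Consequently every solution $u\in\mathscr{C}_{+}(a,b)$ of the equation $vu=w$ (respectively $uv=w$) is also a solution of the same equation in $\mathscr{C}(a,b)$, yielding the inclusions
\[
\{u\in\mathscr{C}_{+}(a,b)\colon vu=w\}\subseteq\{u\in\mathscr{C}(a,b)\colon vu=w\}
\]
and
\[
\{u\in\mathscr{C}_{+}(a,b)\colon uv=w\}\subseteq\{u\in\mathscr{C}(a,b)\colon uv=w\}.
\]
Lemma~I.1 of \cite{Eberhart-Selden=1969} guarantees that the two right-hand sides are finite, so the left-hand sides are finite as subsets of finite sets.

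There is no essential obstacle; the only point to verify is the compatibility of the operations, which is immediate from Proposition~\ref{proposition-3}. Should one prefer a self-contained argument, one could instead work directly from the two cases of the product formula for $b^{i_1}a^{j_1}\cdot b^s a^t$ displayed in the proof of Proposition~\ref{proposition-3}: in each case, one of the exponents of a would-be solution $u=b^s a^t$ is forced by the coordinates of $v$ and $w$, while the other ranges over at most $j_1+1$ integers, giving an explicit finite bound. The submonoid argument is however the shorter route and matches the framing of the statement.
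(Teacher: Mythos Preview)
Your proposal is correct and matches the paper's approach exactly: the paper does not give a standalone proof but simply records that the statement is implied by Lemma~I.1 of \cite{Eberhart-Selden=1969}, and you have spelled out precisely that implication via the submonoid inclusion from Proposition~\ref{proposition-3}. The optional self-contained count you sketch is a harmless extra, but the citation argument already suffices and is what the paper intends.
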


Proposition~\ref{proposition-11} describes the closure of the monoid $\mathscr{C}_{+}(a,b)$ in a Hausdorff semitopological monoid.

\begin{proposition}\label{proposition-11}
If the monoid $\mathscr{C}_{+}(a,b)$ is a dense subsemigroup of a Hausdorff semitopological monoid $S$ and $I=S\setminus\mathscr{C}_{+}(a,b)\neq\varnothing$ then $I$ is a closed two-sided ideal of the semigroup $S$.
\end{proposition}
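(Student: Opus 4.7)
The plan is to establish two things in turn: that $I$ is closed in $S$, and that $I$ is a two-sided ideal of $S$.

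For closedness, I would show that every $x\in\mathscr{C}_+(a,b)$ is isolated in $S$. Since $S$ is a Hausdorff semitopological monoid, the subspace topology on $\mathscr{C}_+(a,b)$ is itself separately continuous, hence in particular left-continuous; Theorem~\ref{theorem-6} then forces it to be discrete. So there exists an open $U\subseteq S$ with $U\cap\mathscr{C}_+(a,b)=\{x\}$. By density of $\mathscr{C}_+(a,b)$ in $S$, $\{x\}$ is dense in $U$; as $S$ is Hausdorff, $\{x\}$ is also closed in $U$, which forces $U=\{x\}$. Consequently $\mathscr{C}_+(a,b)$ is open in $S$, so $I$ is closed; moreover, every singleton $\{w\}\subseteq\mathscr{C}_+(a,b)$ is open in $S$, a fact that will be used below.

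For the ideal property I would argue by contradiction. Suppose $y\in I$, $s\in S$ and $sy=w\in\mathscr{C}_+(a,b)$ (the case $ys\in\mathscr{C}_+(a,b)$ is entirely symmetric, with the roles of left and right shifts swapped). The first step is to reduce to the situation $s\in\mathscr{C}_+(a,b)$: if instead $s\in I$, then continuity of the right shift $\rho_y\colon S\to S$ makes $\rho_y^{-1}(\{w\})$ an open neighbourhood of $s$, so by density it contains some $s'\in\mathscr{C}_+(a,b)$ with $s'y=w$, and I may replace $s$ by $s'$. With $s\in\mathscr{C}_+(a,b)$ in hand, Lemma~\ref{lemma-10} provides that the set $A=\{u\in\mathscr{C}_+(a,b)\colon su=w\}$ is finite. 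The left shift $\lambda_s$ is continuous, so $\lambda_s^{-1}(\{w\})$ is an open neighbourhood of $y$; since $A\subseteq\mathscr{C}_+(a,b)$ while $y\in I$, we have $y\notin A$, and Hausdorffness lets me shrink this neighbourhood to an open $V\ni y$ with $V\cap A=\varnothing$. But any $u\in V\cap\mathscr{C}_+(a,b)$ would satisfy $su=w$ and hence $u\in A$, forcing $V\cap\mathscr{C}_+(a,b)=\varnothing$, which contradicts density.

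The argument is really just a clean assembly of three ingredients already available: openness of singletons in $\mathscr{C}_+(a,b)$ (from Theorem~\ref{theorem-6} plus density), finiteness of one-sided equation solutions (Lemma~\ref{lemma-10}), and Hausdorff separation of a point from a finite set. The only step that requires a moment's care is recognizing that each direction of the ideal inclusion genuinely uses both one-sided continuities of the multiplication: one of them for the density reduction of $s$ to $\mathscr{C}_+(a,b)$, the other for pulling back $\{w\}$ through the appropriate shift — so semitopological (not merely left- or right-) continuity of $S$ is used here in an essential way.
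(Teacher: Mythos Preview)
Your proof is correct and follows essentially the same approach as the paper: both arguments first establish that $\mathscr{C}_+(a,b)$ is open in $S$ via Theorem~\ref{theorem-6}, then derive the ideal property from Lemma~\ref{lemma-10} together with separate continuity and density. The only cosmetic differences are that the paper invokes a general fact about dense locally compact subspaces of Hausdorff spaces (Engelking, Theorem~3.3.9) where you give a direct density--Hausdorff argument for openness, and the paper splits the ideal verification into the three cases $\mathscr{C}_+(a,b)\cdot I$, $I\cdot\mathscr{C}_+(a,b)$, $I\cdot I$ rather than performing your reduction step; the underlying ideas are identical.
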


\begin{proof}
Since every discrete space is locally compact, Theorem~3.3.9 of \cite{Engelking=1989} implies that $\mathscr{C}_{+}(a,b)$ is an open subset of $S$.

\smallskip

Fix an arbitrary element $y\in I$. If $xy=z\notin I$ for some $x\in\mathscr{C}_{+}(a,b)$ then there exists an open neighbourhood $U(y)$ of the point $y$ in the space $S$ such that $\{x\}\cdot U(y)=\{z\}\subset\mathscr{C}_{+}(a,b)$. The neighbourhood $U(y)$ contains infinitely many elements of the semigroup $\mathscr{C}_{+}(a,b)$. This contradicts Lemma~\ref{lemma-10}. The obtained contradiction implies that $xy\in I$ for all $x\in  \mathscr{C}_{+}(a,b)$ and $y\in I$. The proof of the statement that $yx\in I$ for all $x\in  \mathscr{C}_{+}(a,b)$ and $y\in I$ is similar.

\smallskip

Suppose to the contrary that $xy=w\notin I$ for some $x,y\in I$. Then $w\in \mathscr{C}_{+}(a,b)$ and the separate continuity of the semigroup operation in $S$ implies that there exist open neighbourhoods $U(x)$ and $U(y)$ of the points $x$ and $y$ in $S$, respectively, such that $\{x\}\cdot U(y)=\{w\}$ and $U(x)\cdot \{y\}=\{w\}$. Since both neighbourhoods $U(x)$ and $U(y)$ contain infinitely many elements of the semigroup $\mathscr{C}_{+}(a,b)$, both equalities $\{x\}\cdot U(y)=\{w\}$ and $U(x)\cdot \{y\}=\{w\}$ contradict mentioned above Lemma~\ref{lemma-10}. The obtained contradiction implies that $xy\in I$.
\end{proof}

The proof of Proposition~\ref{proposition-12}  is similar to Proposition~\ref{proposition-11}.

\begin{proposition}\label{proposition-12}
If the monoid $\mathscr{C}_{-}(a,b)$ is a dense subsemigroup of a Hausdorff semitopological monoid $S$ and $I=S\setminus\mathscr{C}_{-}(a,b)\neq\varnothing$ then $I$ is a closed two-sided ideal of the semigroup $S$.
\end{proposition}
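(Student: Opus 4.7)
The plan is to mirror Proposition~\ref{proposition-11} and its proof, transporting the two key inputs across the anti-isomorphism $\alpha\colon \mathscr{C}_{+}(a,b)\to\mathscr{C}_{-}(a,b)$ supplied by Proposition~\ref{proposition-4}. The inputs I need are: the discreteness of $\mathscr{C}_{-}(a,b)$ as a subspace of $S$, and the finiteness of one-sided factor sets in $\mathscr{C}_{-}(a,b)$; separate continuity of multiplication is inherited directly from $S$.

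For discreteness, I would invoke Theorem~\ref{theorem-7}: the subspace topology on $\mathscr{C}_{-}(a,b)$ inherited from $S$ is Hausdorff and shift-continuous (both shifts are restrictions of continuous maps on $S$ to a subsemigroup), hence in particular right-continuous, and so must be discrete. Since discrete spaces are locally compact and $\mathscr{C}_{-}(a,b)$ is dense in the Hausdorff space $S$, Theorem~3.3.9 of \cite{Engelking=1989} gives that $\mathscr{C}_{-}(a,b)$ is open in $S$, which already yields that $I$ is closed. For the finiteness statement, I would record the $\mathscr{C}_{-}$-analogue of Lemma~\ref{lemma-10}: for all $v,w\in\mathscr{C}_{-}(a,b)$ the sets $\{u\in\mathscr{C}_{-}(a,b)\colon vu=w\}$ and $\{u\in\mathscr{C}_{-}(a,b)\colon uv=w\}$ are finite. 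This is immediate from $\alpha$: a solution of $vu=w$ corresponds via $\alpha^{-1}$ to a solution of $\alpha^{-1}(u)\alpha^{-1}(v)=\alpha^{-1}(w)$ in $\mathscr{C}_{+}(a,b)$, of which there are only finitely many by Lemma~\ref{lemma-10}, and symmetrically for the other factor set.

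With these ingredients in hand, the ideal argument of Proposition~\ref{proposition-11} transfers verbatim. Fixing $y\in I$, if $xy=z\in\mathscr{C}_{-}(a,b)$ for some $x\in\mathscr{C}_{-}(a,b)$, then left continuity of the shift $\lambda_x$ in $S$ produces an open neighbourhood $U(y)$ with $\{x\}\cdot U(y)=\{z\}$; since $\mathscr{C}_{-}(a,b)$ is dense and discrete in $S$, any neighbourhood of an $I$-point meets $\mathscr{C}_{-}(a,b)$ in an infinite set, contradicting the finiteness above. The case $yx\in\mathscr{C}_{-}(a,b)$ with $y\in I$, $x\in\mathscr{C}_{-}(a,b)$, and the case $xy\in\mathscr{C}_{-}(a,b)$ with both $x,y\in I$, are handled identically using right and separate continuity. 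The only step that deserves a moment of care — and the one point where the argument is not a literal copy of Proposition~\ref{proposition-11} — is the observation that $\alpha$ swaps left and right factor sets, so invoking Lemma~\ref{lemma-10} for one side in $\mathscr{C}_{+}(a,b)$ delivers finiteness on the opposite side in $\mathscr{C}_{-}(a,b)$; this bookkeeping is the only non-routine aspect of what is otherwise a transparent translation.
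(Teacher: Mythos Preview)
Your proposal is correct and follows precisely the route the paper intends: the paper's own proof of Proposition~\ref{proposition-12} consists of the single remark that it ``is similar to Proposition~\ref{proposition-11}'', and what you have written is exactly that similar proof, with the discreteness input supplied by Theorem~\ref{theorem-7} and the finite-preimage input (the $\mathscr{C}_{-}$-analogue of Lemma~\ref{lemma-10}) transported across the anti-isomorphism of Proposition~\ref{proposition-4}. Your observation that $\alpha$ swaps the two factor sets is a nice bit of hygiene but ultimately inessential, since Lemma~\ref{lemma-10} asserts finiteness on both sides simultaneously.
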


It is well known that neither the bicyclic monoid no the Taimanov semogroup $\mathfrak{T}$ do not embed into compact Hausdorff topological semigroups \cite{Gutik=2016, Koch-Wallace=1957}. Later we show that there exists a compact Hausdorff topological monoid $S$ which contains the monoid $\mathscr{C}_{+}(a,b)$ as a dense submonoid.

\begin{example}
Put $S=\mathscr{C}_{+}(a,b)\sqcup\{0\}$ is the monoid $\mathscr{C}_{+}(a,b)$ with the adjoined zero $0$. We define the topology $\tau_S$ on the semigroup $S$ in the following way. All points of $\mathscr{C}_{+}(a,b)$ are isolated in $(S,\tau_S)$ and put the family $\mathscr{B}_S=\left\{U_p(0)\colon p\in\omega\right\}$, where \begin{equation*}
  U_p(0)=\{0\}\cup\left\{b^{i}a^{j}\in \mathscr{C}_{+}(a,b)\colon j\geqslant p\right\},
\end{equation*}
is the system of open neighbourhoods at zero $0$. It is obvious that the family $\mathscr{B}_{S}$ satisfies the properties (BP1)--(BP3) of \cite{Engelking=1989}, and hence it generates the topology $\tau_{S}$ on the monoid $S$. It is obvious that $(S,\tau_S)$ is a Hausdorff topological space, and since the set $U_p(0)\setminus\{0\}$ has the finite complement in $\mathscr{C}_{+}(a,b)$, the space $(S,\tau_S)$ is compact.

The following equality
\begin{equation}\label{eq-2.10}
  b^xa^y\cdot b^ia^i=
  \left\{
    \begin{array}{ll}
      b^{x-y+i}a^{i}, & \hbox{if~} y<i;\\
      b^{x}a^{i},     & \hbox{if~} y=i; \\
      b^{x}a^{y},     & \hbox{if~} y>i,
    \end{array}
  \right.
\end{equation}
implies that $U_p(0)\cdot U_p(0)\subseteq U_p(0)$ and $b^ka^l\cdot U_p(0)\subseteq U_p(0)$ for any $b^ka^l\in \mathscr{C}_{+}(a,b)$. Also by \eqref{eq-2.10} we have that $U_p(0)\cdot b^ia^i\subseteq U_p(0)$ for $p\geqslant i$. Hence $(S,\tau_S)$ is a topological semigroup.
\end{example}

\section{\textbf{Some examples}}

In this section we construct non-discrete Hausdorff right-continuous topology on the monoid $\mathscr{C}_{+}(a,b)$ which is not left-continuous.

\smallskip

By $(\omega,+)$ we denote the additive semigroup of non-negative integers.

\smallskip

The semigroup operation of $\mathscr{C}_{+}(a,b)$ implies that
\begin{equation*}
  R_k=\left\{b^ka^{k+s}\in\mathscr{C}_{+}(a,b)\colon s\in\omega \right\}
\end{equation*}
is a subsemigroup of $\mathscr{C}_{+}(a,b)$ for any $k\in\omega$. For any $k\in\omega$ we define the map $\iota_k\colon (\omega,+)\to R_k$ by the formula $\iota_k(s)=b^ka^{k+s}$. Since $\iota_k(s_1+s_2)=b^ka^{k+s_1+s_2}$ and
\begin{equation*}
  \iota_k(s_1)\cdot \iota_k(s_2)=b^ka^{k+s_1}\cdot b^ka^{k+s_2}=b^ka^{s_1}\cdot a^{k+s_2}=b^ka^{k+s_1+s_2},
\end{equation*}
we obtain that the map $\iota_k$ is a monoid homomorphism. It is obvious that the map $\iota_k$ is bijective, and hence it is an isomorphism.

\smallskip

Let $p$ be a prime positive integer. Then the family of subgroups $\{p^n\mathbb{Z}\colon n\in \mathbb{N}\}$ of the additive group of integers $\mathbb{Z}$ form a fundamental system of neighborhoods of the zero $0$ for a linear precompact topology $\tau_p$ on $\mathbb{Z}$, which is usually called the \emph{$p$-adic topology} (see \cite[p.~45]{Dikranjan-Stoyanov-Prodanov=1990}). It is well known that for any prime integer $p$ the $p$-adic topology $\tau_p$ on the additive group of integers $\mathbb{Z}$ is a group topology, i.e., the group operation and the inversion are continuous in $(\mathbb{Z},\tau_p)$.

\begin{example}
Fix an arbitrary prime positive integer $p$. Put $\tau_p^i$ is the induced topology on $(\omega,+)$ from $(\mathbb{Z},\tau_p)$. Since $\tau_p$ is a group topology on the additive group of integers $\mathbb{Z}$ and $(\omega,+)$ is a subsemigroup of $\mathbb{Z}$, we have that $\tau_p^i$ is a semigroup topology on $(\omega,+)$. Since the $T_0$-space of a topological group is completely regular (Tychonoff), Theorem~2.1.6 of \cite{Engelking=1989} implies that $(\omega,+,\tau_p^i)$ is a completely regular space. Also, by Hausdorffness of $(\omega,\tau_p^i)$ we have that the family $\mathscr{B}_p^i(s)=\left\{U_n(s)\colon n\in\omega\right\}$, where $U_n(s)=\left\{s+p^nj\colon j\in\omega\right\}$, determines the system of open neighbourhoods at the point $s$ in the space $(\omega,\tau_p^i)$.

\smallskip

We define the topology $\tau_p^+$ on the semigroup $\mathscr{C}_{+}(a,b)$ in the following way. For any $k,s\in\omega$ we denote $W_n(b^ka^{k+s})=\iota_k(U_n(s))$ and  put the family
\begin{equation*}
  \mathscr{B}_p^+(b^ka^{k+s})=\left\{W_n(b^ka^{k+s})\colon n\in\omega\right\}
\end{equation*}
is the system of open neighbourhoods at the point $b^ka^{k+s}$. It is obvious that the family $\mathscr{B}_p^+=\left\{\mathscr{B}_p^+(b^ka^{k+s})\colon k,s\in\omega\right\}$ satisfies the properties (BP1)--(BP3) of \cite{Engelking=1989}, and hence it generates the topology $\tau_p^+$ on the monoid $\mathscr{C}_{+}(a,b)$.

\smallskip

Simple calculations show that
\begin{equation*}
  W_n(b^ka^{k+s})=\left\{b^ka^{k+s+p^nj}\colon j\in\omega\right\}
\end{equation*}
for any $k,s\in \omega$. This implies that the topological space $(\mathscr{C}_{+}(a,b),\tau_p^+)$ is homeomorphic to a countable topological sum of spaces $(\omega,\tau_p^i)$. By the Birkhoff-Kakutani Theorem (see \cite[Section 1.22]{Montgomery-Zippin=1955}) the topological group $(\mathbb{Z},\tau_p)$ is metrizable, and hence by Theorem~4.2.1 of \cite{Engelking=1989} the space $(\mathscr{C}_{+}(a,b),\tau_p^+)$ is metrizable as well. Also by Corollary~4.1.13 of \cite{Engelking=1989} the space $(\mathscr{C}_{+}(a,b),\tau_p^+)$ is perfectly normal, i.e., $(\mathscr{C}_{+}(a,b),\tau_p^+)$  is a normal space and every
closed subset of $(\mathscr{C}_{+}(a,b),\tau_p^+)$ is a $G_\delta$-set.
\end{example}

\begin{proposition}\label{proposition-15}
$(\mathscr{C}_{+}(a,b),\tau_p^+)$ is a right topological semigroup. Moreover $(\mathscr{C}_{+}(a,b),\tau_p^+)$ is not a left topological semigroup.
\end{proposition}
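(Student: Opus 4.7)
My plan rests on the decomposition $\mathscr{C}_{+}(a,b)=\bigsqcup_{k\in\omega}R_k$ being a \emph{clopen} partition of $(\mathscr{C}_{+}(a,b),\tau_p^+)$: every basic neighbourhood $W_n(b^ka^{k+s})$ lies entirely inside $R_k$, so each row $R_k$ is open, and its complement, being the union of the remaining open rows, is also open; hence each $R_k$ is clopen and the space is the topological sum of the $R_k$'s (as noted in the paper). Under $\iota_k$ the subspace topology on $R_k$ corresponds to $\tau_p^i$ on $\omega$.

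For the right-topological assertion I would fix $x=b^ia^j\in\mathscr{C}_{+}(a,b)$ and study $\lambda_x$ one row at a time. The key observation is that in the product $b^ia^j\cdot b^ka^{k+s}$ the case-split of the semigroup operation depends only on the pair $(j,k)$ and not on the running parameter $s$: if $j\leqslant k$ the product equals $b^{k+i-j}a^{k+s}$, and if $j>k$ it equals $b^ia^{j+s}$. So $\lambda_x$ sends the whole row $R_k$ into a single row $R_m$ (with $m=k+i-j$ or $m=i$), and on the $s$-coordinate it acts as translation $s\mapsto s+c(x,k)$ by a constant. Translation by a constant is a homeomorphism of $(\omega,\tau_p^i)$, so $\lambda_x|_{R_k}\colon R_k\to R_m$ is continuous, and since $\mathscr{C}_{+}(a,b)$ is the topological sum of its rows, $\lambda_x$ itself is continuous.

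For the failure of left-continuity I would exhibit a single explicit discontinuous right shift. Take $x=ba\in\mathscr{C}_{+}(a,b)$ and evaluate $\rho_x$ at the identity $1=b^0a^0$. The multiplication formula gives $1\cdot ba=ba\in R_1$, whereas $a^s\cdot ba=a^s\in R_0$ for every $s\geqslant 1$. Any basic neighbourhood $W_n(1)=\{a^{p^nt}:t\in\omega\}$ contains $a^{p^n}$, so $\rho_{ba}(W_n(1))$ meets $R_0$; on the other hand every basic neighbourhood $W_m(ba)$ of the image $\rho_{ba}(1)=ba$ is contained in $R_1$. Since $R_0\cap R_1=\varnothing$, no neighbourhood of $1$ can be mapped inside any neighbourhood of $ba$, proving that $\rho_{ba}$ is discontinuous at $1$.

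The only point requiring any care is the verification that the case-split in the left-shift calculation is governed purely by the fixed data $(j,k)$ and is independent of the variable $s$; once this is in hand, both halves collapse to the (routine) continuity or non-continuity of explicit maps between copies of $(\omega,\tau_p^i)$, and no further work is needed.
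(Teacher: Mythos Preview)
Your argument is correct and follows essentially the same route as the paper: the paper verifies directly that $b^{k_1}a^{k_1+s_1}\cdot W_n(b^{k_2}a^{k_2+s_2})=W_n(b^{k_1}a^{k_1+s_1}\cdot b^{k_2}a^{k_2+s_2})$ by the same three-case split you identify, and uses the identical counterexample $\rho_{ba}$ at $1$ for the second statement. One small wording issue: translation by a positive constant on $(\omega,\tau_p^i)$ is not a homeomorphism (it is not surjective), but it is continuous---which is all your argument actually uses---so this does not affect the validity.
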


\begin{proof}
Fix arbitrary $b^{k_1}a^{k_1+s_1}, b^{k_2}a^{k_2+s_2} \in\mathscr{C}_{+}(a,b)$, $k_1,k_2,s_1,s_2\in\omega$.

\smallskip

We consider the possible cases.

\smallskip

If $k_1+s_1<k_2$, then
\begin{equation*}
b^{k_1}a^{k_1+s_1}\cdot b^{k_2}a^{k_2+s_2}= b^{k_1-k_1-s_1+k_2}a^{k_2+s_2}=b^{k_2-s_1}a^{k_2+s_2},
\end{equation*}
and for any $n\in\omega$ we get that
\begin{align*}
  b^{k_1}a^{k_1+s_1}\cdot W_n(b^{k_2}a^{k_2+s_2})&=\left\{b^{k_1}a^{k_1+s_1}\cdot b^{k_2}a^{k_2+s_2+p^nj}\colon j\in\omega\right\}= \\
   &=\left\{b^{k_2-s_1}a^{k_2+s_2+p^nj}\colon j\in\omega\right\}= \\
   &=W_n(b^{k_2-s_1}a^{k_2+s_2}).
\end{align*}

If $k_1+s_1=k_2$, then
\begin{equation*}
b^{k_1}a^{k_1+s_1}\cdot b^{k_2}a^{k_2+s_2}= b^{k_1}a^{k_2+s_2},
\end{equation*}
and for any $n\in\omega$ we obtain that
\begin{align*}
  b^{k_1}a^{k_1+s_1}\cdot W_n(b^{k_2}a^{k_2+s_2})&=\left\{b^{k_1}a^{k_1+s_1}\cdot b^{k_2}a^{k_2+s_2+p^nj}\colon j\in\omega\right\}= \\
   &=\left\{b^{k_1}a^{k_2+s_2+p^nj}\colon j\in\omega\right\}= \\
   &=W_n(b^{k_1}a^{k_2+s_2}).
\end{align*}

If $k_1+s_1>k_2$, then
\begin{equation*}
b^{k_1}a^{k_1+s_1}\cdot b^{k_2}a^{k_2+s_2}= b^{k_1}a^{k_1+s_1-k_2+k_2+s_2}=b^{k_1}a^{k_1+s_1+s_2},
\end{equation*}
and for any $n\in\omega$ we get that
\begin{align*}
  b^{k_1}a^{k_1+s_1}\cdot W_n(b^{k_2}a^{k_2+s_2})&=\left\{b^{k_1}a^{k_1+s_1}\cdot b^{k_2}a^{k_2+s_2+p^nj}\colon j\in\omega\right\}= \\
   &=\left\{b^{k_1}a^{k_1+s_1+s_2+p^nj}\colon j\in\omega\right\}= \\
   &=W_n(b^{k_1}a^{k_1+s_1+s_2}).
\end{align*}

The above arguments imply that $(\mathscr{C}_{+}(a,b),\tau_p^+)$ is a right topological semigroup.

\smallskip

Next we show that the second statement holds. It is obvious that $1\cdot ba=ba$. For any open neighbourhood $W_n(1)=\left\{a^{p^nj}\colon j\in\omega\right\}$ of the unit element $1$ in $(\mathscr{C}_{+}(a,b),\tau_p^+)$ we have that
\begin{equation*}
  W_n(1)\cdot ba=\big\{a^{p^nj}\cdot ba\colon j\in\omega\big\}.
\end{equation*}
Then for any positive integer $j$ we get that
\begin{equation*}
  a^{p^nj}\cdot ba=a^{p^nj}\notin W_m(ba)
\end{equation*}
for any $m\in\omega$. This completes the second part of the proposition.
\end{proof}

\begin{remark}\label{remark-16}
Simple verifications show that for arbitrary $b^{k_1}a^{k_1+s_1}, b^{k_2}a^{k_2+s_2} \in\mathscr{C}_{+}(a,b)$, $k_1,k_2,s_1,s_2\in\omega$ with $k_1+s_1\leqslant k_2$, the semigroup operation is not right-continuous in $(\mathscr{C}_{+}(a,b),\tau_p^+)$. The proof of this statement  is similar to the second statement of Proposition~\ref{proposition-15}.
\end{remark}

Propositions~\ref{proposition-4} and \ref{proposition-15} imply the following.

\begin{proposition}\label{proposition-17}
The semigroup $\mathscr{C}_{-}(a,b)$ admits a non-discrete left-continuous topology $\tau_p^-$ which is not right-continuous.
\end{proposition}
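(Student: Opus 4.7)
The plan is to transport the topology $\tau_p^+$ on $\mathscr{C}_{+}(a,b)$ to $\mathscr{C}_{-}(a,b)$ via the anti-isomorphism $\alpha\colon\mathscr{C}_{+}(a,b)\to\mathscr{C}_{-}(a,b)$, $\alpha(b^ia^j)=b^ja^i$, supplied by Proposition~\ref{proposition-4}. Concretely, I define $\tau_p^-$ as the pushforward topology, declaring $V\subseteq\mathscr{C}_{-}(a,b)$ to be open if and only if $\alpha^{-1}(V)\in\tau_p^+$. By construction $\alpha$ is then a homeomorphism, so $\tau_p^-$ is Hausdorff and non-discrete (since $\tau_p^+$ is non-discrete: each basic neighbourhood $W_n(b^ka^{k+s})$ is infinite).

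The key observation is that anti-isomorphisms interchange left and right shifts. Namely, for $s,x\in\mathscr{C}_{+}(a,b)$, writing $t=\alpha(s)$ and $y=\alpha(x)$, we have
\begin{equation*}
\alpha(s\cdot x)=\alpha(x)\cdot\alpha(s)=y\cdot t,
\end{equation*}
so $\alpha\circ\lambda_s^{+}=\rho_t^{-}\circ\alpha$, where $\lambda_s^{+}$ denotes the left shift in $\mathscr{C}_{+}(a,b)$ and $\rho_t^{-}$ the right shift in $\mathscr{C}_{-}(a,b)$. Since $\alpha$ is a homeomorphism, continuity of $\lambda_s^{+}$ on $(\mathscr{C}_{+}(a,b),\tau_p^+)$ is equivalent to continuity of $\rho_t^{-}$ on $(\mathscr{C}_{-}(a,b),\tau_p^-)$; symmetrically for the other pairing $\alpha\circ\rho_s^{+}=\lambda_t^{-}\circ\alpha$.

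From this dictionary everything follows at once. Proposition~\ref{proposition-15} asserts that every left shift in $(\mathscr{C}_{+}(a,b),\tau_p^+)$ is continuous, so every right shift in $(\mathscr{C}_{-}(a,b),\tau_p^-)$ is continuous, i.e.\ $\tau_p^-$ is left-continuous. Proposition~\ref{proposition-15} also exhibits an element, namely $ba\in\mathscr{C}_{+}(a,b)$, whose right shift $\rho_{ba}^{+}$ fails to be continuous at $1$; applying $\alpha$ yields an element $t=\alpha(ba)=ba\in\mathscr{C}_{-}(a,b)$ such that the left shift $\lambda_{t}^{-}$ is discontinuous at $\alpha(1)=1$, so $\tau_p^-$ is not right-continuous.

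No serious obstacle arises: the argument is entirely formal, driven by the identities $\alpha\circ\lambda_s^{+}=\rho_{\alpha(s)}^{-}\circ\alpha$ and $\alpha\circ\rho_s^{+}=\lambda_{\alpha(s)}^{-}\circ\alpha$. The only point that deserves a moment of care is the explicit witness to the failure of right-continuity, which one simply reads off from the second half of the proof of Proposition~\ref{proposition-15} and transports through $\alpha$.
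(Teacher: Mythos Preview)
Your proof is correct and follows exactly the approach indicated in the paper, which simply states that the result follows from Propositions~\ref{proposition-4} and~\ref{proposition-15}. You have merely spelled out the standard fact that an anti-isomorphism, made into a homeomorphism by transporting the topology, exchanges left shifts with right shifts, which is precisely the implicit content of the paper's one-line deduction.
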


\begin{remark}\label{remark-18}
Alex Ravsky in Topological Algebra Seminar at Lviv University posed the following question: \emph{Is any Hausdorff left-continuous (right-continuous) topology on the semigroup $\mathscr{C}_{+}(a,b)$ ($\mathscr{C}_{+}(a,b)$) discrete?}
Propositions~\ref{proposition-15} and \ref{proposition-17} give negative answer on this question.
\end{remark}

\section*{\textbf{Acknowledgements}}

The authors acknowledge Alex Ravsky and the referee for his/her comments and suggestions.

\end{document}